\documentclass[11pt,a4paper]{amsart}

\usepackage[utf8]{inputenc}
\usepackage[T1]{fontenc}
\usepackage{lmodern}
\usepackage{amsmath,amssymb,amsthm}
\usepackage{mathtools}
\usepackage{xcolor}
\usepackage{hyperref}
\usepackage{microtype}
\usepackage{booktabs}
\usepackage{url}

\hypersetup{
  colorlinks=true,
  linkcolor=blue!60!black,
  citecolor=blue!60!black,
  urlcolor=blue!60!black,
  pdftitle={Three Brillhart-Lehmer-Selfridge primality proofs for Wagstaff numbers},
  pdfauthor={Alexey Dolotov},
  pdfsubject={Primality testing for Wagstaff numbers via BLS N-1 and APR-CL},
  pdfkeywords={Wagstaff numbers; primality testing; Chebyshev polynomials; Pell sequences; Brillhart-Lehmer-Selfridge; APR-CL; cyclotomic decomposition},
  bookmarksnumbered=true,
}

\theoremstyle{plain}
\newtheorem{theorem}{Theorem}[section]
\newtheorem{proposition}[theorem]{Proposition}
\newtheorem{lemma}[theorem]{Lemma}

\theoremstyle{definition}
\newtheorem{remark}[theorem]{Remark}

\DeclareMathOperator{\ord}{ord}

\newcommand{\Z}{\mathbb{Z}}

\newcommand{\wom}{\omega_3}
\newcommand{\Wp}{W_p}

\newcommand{\sqrttwo}{\sqrt{2}}

\title[Three BLS primality proofs for Wagstaff numbers]
      {Three Brillhart--Lehmer--Selfridge primality proofs \\ for Wagstaff numbers}

\author{Alexey Dolotov}
\address{Independent researcher}
\email{science@dolotov.com}
\urladdr{https://orcid.org/0009-0003-9481-5808}

\date{April 2026}

\subjclass[2020]{Primary 11Y11; Secondary 11A51, 11B39}
\keywords{Wagstaff numbers, primality testing, Chebyshev polynomials,
  Pell sequences, Brillhart--Lehmer--Selfridge, APR-CL}

\begin{document}

\begin{abstract}
The Wagstaff numbers $W_p = (2^p + 1)/3$ for odd primes $p$ are the
natural $+1$ companions of the Mersenne numbers. Known primality proofs
for $W_p$ with $p \geq 2617$ (as recorded in the
Caldwell/Wagstaff-project archives~\cite{Cunningham,Caldwell}) rely on
the elliptic-curve primality proving algorithm of
Atkin--Morain~\cite{AtkinMorain}; Chebyshev/Lucas-type tests, while
available as compositeness criteria, remain conjectural on the
sufficiency side. We present fully verified primality proofs of
$W_{2617}$ (788 digits), $W_{10501}$ (3161 digits), and $W_{12391}$
(3730 digits), independent of ECPP and relying only on classical
$N-1$ machinery. The proofs apply the Brillhart--Lehmer--Selfridge (BLS)
$N - 1$ criterion~\cite{BLS75} to the cyclotomic decomposition
$2^{p-1} - 1 = \prod_{d \mid p-1} \Phi_d(2)$, harvesting factored
content from the Cunningham project tables~\cite{Cunningham} (used as
evidence) and FactorDB~\cite{FactorDB} (used only as a discovery aid,
with every retrieved factor re-certified). As an independent check on
the $\Z[\sqrt{2}]$ arithmetic implementation, the Chua $N + 1$
congruence $\omega_3^{(W_p + 1)/2} \equiv -1 \pmod{W_p}$ --- the
$a=3$ case of the Chua framework~\cite{Chua} with
$\omega_3 = 3 + 2\sqrt{2}$, a necessary condition for Wagstaff
primality --- is verified at each $W_p$.

BLS $N - 1$ requires $p - 1$ sufficiently smooth that enough
cyclotomic factors $\Phi_d(2)$ are fully factored. On the
factorisation data consulted (Cunningham project tables and FactorDB,
January--April 2026), $p = 10501$ and $p = 12391$ are the only
exponents above $2617$ in the known Wagstaff prime/PRP list meeting
this ceiling (Remark~\ref{rem:bls-ceiling}). Every cofactor primality
is certified unconditionally by APR-CL~\cite{APR,CohenLenstraAlgo};
the method is independent of the Chebyshev sufficiency conjecture,
and every step is reproducible from the archived scripts.
\end{abstract}

\maketitle

\section{Introduction}

\subsection{Wagstaff numbers and the ECPP era}

For an odd prime $p$, the \emph{Wagstaff number} is
\begin{equation*}
  W_p \;=\; \frac{2^p + 1}{3}.
\end{equation*}
The exponents $p$ for which $W_p$ is prime form OEIS A000978; the
Wagstaff primes themselves, as a sequence of integers, form OEIS
A000979. All $W_p$ with $p \leq 42737$ have been proved prime or
composite (see Appendix~\ref{sec:appendix} for the $29$ proved
exponents); $7$ more probable primes are known up to the current
computational horizon. In the Caldwell Prime Pages
archive~\cite{Caldwell} and the Cunningham project
tables~\cite{Cunningham} as consulted for this paper (January--April
2026), the recorded primality proofs for Wagstaff numbers $W_p$ with
$p \geq 2617$ are ECPP proofs in the sense of
Atkin--Morain~\cite{AtkinMorain}. We are not aware of previously
published BLS $N - 1$ proofs for $W_{2617}$, $W_{10501}$, or
$W_{12391}$.

The Mersenne counterparts $M_p = 2^p - 1$ admit the Lucas--Lehmer
test~\cite{Lehmer}, a deterministic standalone primality criterion. No
deterministic standalone test of comparable simplicity is known for
Wagstaff numbers. A natural Chebyshev/Lucas-type candidate --- the
single congruence
\begin{equation*}
  \wom^{(W_p + 1)/2} \;\equiv\; -1 \pmod{\Wp}, \qquad \wom = 3 + 2\sqrttwo,
\end{equation*}
in $\Z[\sqrttwo]$ (the $a = 3$ case of the Chua
framework~\cite{Chua}) --- is known to hold for every Wagstaff prime,
but its sufficiency remains conjectural.

In the absence of a deterministic standalone test, the
Brillhart--Lehmer--Selfridge (BLS) framework~\cite{BLS75} provides
unconditional primality proofs whenever $N \pm 1$ has a ``large
enough'' completely factored divisor: in the $N - 1$ form used here,
one needs a divisor $F \mid N - 1$ with $F^3 > N$ and with every
prime of $F$ independently certified. We present BLS-based primality
proofs for the three exponents $p \in \{2617, 10501, 12391\}$.

\subsection{Motivation}

The three exponents addressed here are already known prime; the
contribution of the present paper is methodological, not
existence-of-a-prime. Two points motivate revisiting them via BLS:

\begin{itemize}
\item \emph{Auditability.} A BLS certificate is a short JSON record of
  per-prime witnesses plus a discriminant check; re-verification is a
  handful of modular exponentiations whose correctness rests on standard
  Fermat-type arithmetic and on APR-CL for each cofactor. ECPP
  certificates are authoritative but substantially larger and require
  elliptic-curve evaluation to replay; a BLS/APR-CL pipeline provides an
  independent confirmation of the primality claim.
\item \emph{Technique demonstration.} The cyclotomic harvesting of
  factored content from $2^{p-1} - 1 = \prod_{d \mid p - 1} \Phi_d(2)$
  for the Wagstaff $N - 1$ side is straightforward in principle but, so
  far as we are aware, has not been applied in the published literature
  to the specific exponents $p \in \{2617, 10501, 12391\}$. The
  procedure is generalisable: any Wagstaff exponent $p$ with
  sufficiently smooth $p - 1$ and sufficient Cunningham coverage is a
  candidate.
\end{itemize}

\subsection{This paper}

We give three unconditional BLS primality proofs, one per exponent.
The proofs and the cross-check rest on three ingredients:
\begin{enumerate}
\item Establishing that Condition (II),
  $\wom^{(W_p+1)/2} \equiv -1 \pmod{W_p}$, holds for every Wagstaff
  prime $W_p$ (Proposition~\ref{prop:cond2}) --- this is the
  $a = 3$ case of the Chua framework~\cite{Chua} and follows from
  $W_p \equiv 3 \pmod{8}$.
\item For the three target exponents $p \in \{2617, 10501, 12391\}$,
  applying the BLS $N - 1$ criterion (Theorem~\ref{thm:BLSN-1}) to the
  cyclotomic decomposition $2^{p-1} - 1 = \prod_{d \mid p - 1}
  \Phi_d(2)$, harvesting factored portions from the Cunningham project
  tables~\cite{Cunningham} plus direct ECM/$p{-}1$/Pollard-$\rho$
  computation. Every cofactor primality is certified by
  APR-CL~\cite{APR,CohenLenstraAlgo}.
\item Verifying Condition~(II) independently at each of the three
  $W_p$ as an auxiliary cross-substrate check on the
  $\Z[\sqrttwo]$ arithmetic implementation (a distinct code path
  from the $\Z$-only BLS arithmetic).
\end{enumerate}

The resulting proofs are short (Section~\ref{sec:proofs}), fully
unconditional, and cleanly separated from the Chebyshev sufficiency
conjecture (not used, not needed). All computations are reproducible
from the scripts described in Section~\ref{sec:comp}.

\subsection{Notation}

Throughout, $p \geq 5$ denotes an odd prime and $W_p = (2^p + 1)/3$.
For an integer or algebraic integer $x$ coprime to a modulus $m$,
$\ord_m(x)$ denotes the multiplicative order of $x$ modulo $m$;
$v_q(\cdot)$ denotes the $q$-adic valuation. We work in
$\Z[\sqrttwo]$, with fundamental unit $\alpha = 1 + \sqrttwo$ and
Chebyshev base $\wom = \alpha^2 = 3 + 2\sqrttwo$. The Pell sequences
$(U_n)$ and $(V_n)$ are defined by $\alpha^n = V_n/2 + U_n \sqrttwo$,
with the recurrence $X_{n+1} = 2 X_n + X_{n-1}$ and the identity
$V_n^2 - 8 U_n^2 = 4(-1)^n$. The Jacobi symbol is written $(a/n)$.

\section{Chebyshev bases and the Chua framework}

\subsection{The Chua identity}

For an integer $a \neq 0, \pm 1$, set $\omega_a = a + \sqrt{a^2 - 1}$,
so that $\omega_a \bar\omega_a = 1$ and $\omega_a^n + \omega_a^{-n} =
2 T_n(a)$ where $T_n$ is the $n$-th Chebyshev polynomial of the first
kind --- hence the name ``Chebyshev base''.

\begin{theorem}[Chua~\cite{Chua}; cf. Williams~{\cite[Ch.~4]{Williams}}]
\label{thm:chua}
Let $Q$ be an odd prime with $\gcd(a^2 - 1, Q) = 1$. Set
$D = a^2 - 1$, $\varepsilon = (D/Q)$, $\delta = (2(a+1)/Q)$. Then
\[
  \omega_a^{(Q - \varepsilon)/2} \;\equiv\; \delta \pmod{Q}
\]
in $\Z[\sqrt{D}]/(Q)$.
\end{theorem}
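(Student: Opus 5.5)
The key observation is that $\omega_a$ is itself a square up to a rational factor: with $s = \sqrt{D}$ we have $(\sqrt{a+1} + \sqrt{a-1})^2 = 2a + 2s = 2\omega_a$. To stay inside $\Z[\sqrt{D}]$ I will use the element $\beta = (a+1) + s$ instead. A direct computation gives
\[
\beta^2 = (a+1)^2 + D + 2(a+1)s = (a+1)\bigl(2a + 2s\bigr) = 2(a+1)\,\omega_a .
\]
Its norm is $N(\beta) = (a+1)^2 - D = 2(a+1)$. Throughout I work in $R = \Z[\sqrt{D}]/(Q)$. Since $Q$ is odd and $Q \nmid D$, the elements $2$ and $a+1$ are units mod $Q$, because $a+1 \mid D$.

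The plan is to raise $\beta^2 = 2(a+1)\omega_a$ to the power $(Q-\varepsilon)/2$. Note that $Q - \varepsilon$ is even, so this exponent is an integer. This gives
\[
\beta^{Q-\varepsilon} \equiv (2(a+1))^{(Q-\varepsilon)/2}\,\omega_a^{(Q-\varepsilon)/2} \pmod Q,
\]
and the proof reduces to computing both sides.

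For the left side I use the Frobenius in $R$: $\beta^Q \equiv (a+1) + s^Q = (a+1) + D^{(Q-1)/2}s \equiv (a+1) + \varepsilon s$, by Euler's criterion. The argument then splits into two cases.
\begin{itemize}
\item If $\varepsilon = 1$, then $\beta^Q \equiv \beta$. Since $N(\beta)$ is a unit mod $Q$, $\beta$ is invertible in $R$, so $\beta^{Q-1} \equiv 1$.
\item If $\varepsilon = -1$, then $\beta^Q \equiv \bar\beta$, so $\beta^{Q+1} \equiv \beta\bar\beta = 2(a+1)$.
\end{itemize}

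For the right side, Euler's criterion gives $(2(a+1))^{(Q-1)/2} \equiv \delta$. The two cases then finish as follows.
\begin{itemize}
\item If $\varepsilon = 1$: we get $1 \equiv \delta\,\omega_a^{(Q-1)/2}$. Since $\delta = \pm 1$, this yields $\omega_a^{(Q-1)/2} \equiv \delta$.
\item If $\varepsilon = -1$: we get $2(a+1) \equiv (2(a+1))^{(Q+1)/2}\omega_a^{(Q+1)/2} = 2(a+1)\cdot\delta\cdot\omega_a^{(Q+1)/2}$. Cancelling the unit $2(a+1)$ again gives $\omega_a^{(Q+1)/2} \equiv \delta$.
\end{itemize}

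The only genuinely non-routine step is finding the auxiliary element $\beta$, that is, the explicit square root of $2(a+1)\omega_a$. Once that identity is verified, everything else is Frobenius plus Euler's criterion. I need to be careful with two points. First, $R$ may fail to be a field when $\varepsilon = 1$, so I avoid arguing via the order of the multiplicative group. Instead I use that $x \mapsto x^Q$ is a ring endomorphism of $R$, which holds because $R$ has characteristic $Q$ and binomial coefficients vanish. Second, the cancellations rely on $\gcd(2(a+1),Q)=1$, which follows from the hypothesis $\gcd(a^2-1,Q)=1$ together with $Q$ odd.
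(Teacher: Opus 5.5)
Your proof is correct and follows the paper's argument essentially step for step: the same auxiliary element $\beta = \gamma = a+1+\sqrt{D}$ with $\gamma^2 = 2(a+1)\omega_a$, Frobenius plus Euler's criterion, invertibility via the norm $2(a+1)$, and the same case split on $\varepsilon$. The only cosmetic difference is in the case $\varepsilon=-1$: you use $\beta^{Q+1} \equiv \beta\bar\beta = 2(a+1)$ and cancel the unit $2(a+1)$, whereas the paper computes $\bar\gamma/\gamma = \omega_a^{-1}$ directly.
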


\begin{proof}
Define $\gamma = a + 1 + \sqrt{D}$; then
$\gamma^2 = 2(a + 1) \cdot \omega_a$ in $\Z[\sqrt{D}]$. Two
evaluations of $\gamma^Q$ in $\Z[\sqrt{D}]/(Q)$:
\begin{itemize}
\item Frobenius: $\sqrt{D}^{\,Q} \equiv \varepsilon \sqrt{D}$ (where
  $\varepsilon = (D/Q) \in \{\pm 1\}$), so
  $\gamma^Q \equiv a + 1 + \varepsilon \sqrt{D}$.
\item Via the square: $\gamma^Q = \gamma \cdot (\gamma^2)^{(Q-1)/2} =
  \gamma \cdot (2(a+1))^{(Q-1)/2} \omega_a^{(Q-1)/2} \equiv \gamma
  \cdot \delta \cdot \omega_a^{(Q-1)/2}$, where
  $\delta = (2(a+1)/Q)$ by Euler's criterion.
\end{itemize}
Equating:
$\gamma \cdot \delta \cdot \omega_a^{(Q-1)/2} \equiv a + 1 +
\varepsilon \sqrt{D}$ in $\Z[\sqrt{D}]/(Q)$.

\emph{Case $\varepsilon = +1$.} The right-hand side equals $\gamma$,
and $\gamma$ is a unit modulo $Q$ (its norm is $(a+1)^2 - D = 2(a+1)$,
coprime to $Q$ by $\gcd(D,Q) = \gcd(a^2-1,Q) = 1$, which implies
$\gcd(2(a+1), Q) = 1$ since $Q$ is odd). Dividing by $\gamma$ yields
$\delta \cdot \omega_a^{(Q - 1)/2} \equiv 1$, i.e.\
$\omega_a^{(Q-1)/2} \equiv \delta$.

\emph{Case $\varepsilon = -1$.} The right-hand side equals
$\bar\gamma = a + 1 - \sqrt{D}$, the conjugate of $\gamma$. A direct
computation gives $\bar\gamma / \gamma = \bar\gamma^2 / (\gamma
\bar\gamma) = ((a+1) - \sqrt{D})^2 / (2(a+1)) = a - \sqrt{D} =
\omega_a^{-1}$. So
$\delta \cdot \omega_a^{(Q - 1)/2} \equiv \omega_a^{-1}$, i.e.\
$\omega_a^{(Q+1)/2} \equiv \delta$.

The two cases combine as
$\omega_a^{(Q - \varepsilon)/2} \equiv \delta \pmod{Q}$.
\end{proof}

\subsection{The base $a=3$}

\begin{lemma}[mod-$8$ residue]
\label{lem:mod8}
For every odd prime $p \geq 3$, $W_p \equiv 3 \pmod{8}$.
\end{lemma}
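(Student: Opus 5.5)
The plan is to compute $3W_p = 2^p + 1$ modulo $24$ and then divide by $3$, which is legitimate once we know the residue of $3W_p$ modulo $24 = 3\cdot 8$. Since $p \geq 3$, we have $2^p \equiv 0 \pmod 8$, so $2^p + 1 \equiv 1 \pmod 8$. Since $p$ is odd, $2^p \equiv (-1)^p = -1 \pmod 3$, so $2^p + 1 \equiv 0 \pmod 3$; this also confirms that $W_p$ is an integer. By the Chinese remainder theorem, $3W_p = 2^p+1 \equiv 9 \pmod{24}$, because $9$ is the unique residue modulo $24$ that is $\equiv 1 \pmod 8$ and $\equiv 0 \pmod 3$.

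Dividing the congruence $3W_p \equiv 9 \pmod{24}$ by $3$ gives $W_p \equiv 3 \pmod 8$. To make this step explicit, write $3W_p = 9 + 24k$, so $W_p = 3 + 8k$.

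An equivalent route is to note that $3$ is invertible modulo $8$ with $3^{-1} \equiv 3$. Then $W_p \equiv 3\cdot(2^p+1) \equiv 3\cdot 1 = 3 \pmod 8$.

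There is no real obstacle here. The only point needing care is the hypothesis $p \geq 3$, which guarantees $8 \mid 2^p$. For $p = 1$ we would get $W_1 = 1$, so the restriction is genuinely needed.

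For later use (Proposition~\ref{prop:cond2}), I would also record the consequences for Theorem~\ref{thm:chua} with $a=3$, $D = 8$:
\begin{itemize}
\item $\varepsilon = (8/Q) = (2/Q) = -1$ for $Q \equiv 3 \pmod 8$;
\item $\delta = (8/Q) = -1$ as well.
\end{itemize}
Together these yield $\omega_3^{(Q+1)/2} \equiv -1$ when $Q = W_p$ is prime.
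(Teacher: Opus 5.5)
Your proof is correct and is essentially the paper's argument: the paper's one-line proof notes $3W_p = 2^p+1 \equiv 1 \pmod 8$ and then divides by $3$ modulo $8$, which is exactly your ``equivalent route'' via $3^{-1}\equiv 3 \pmod 8$; the mod-$24$ CRT step is a valid but unneeded detour. Your remark that $8 \mid 2^p$ requires $p \geq 3$, not merely $p$ odd, is a slightly more careful justification than the paper gives.
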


\begin{proof}
Since $p$ is odd, $3 W_p = 2^p + 1 \equiv 1 \pmod 8$, so $W_p \equiv 3
\pmod 8$.
\end{proof}

\begin{proposition}[Base $a=3$ necessity]
\label{prop:cond2}
For every Wagstaff prime $W_p$ with $p \geq 5$,
\[
  \wom^{(W_p + 1)/2} \;\equiv\; -1 \pmod{W_p}, \qquad \wom = 3 + 2\sqrttwo.
\]
\end{proposition}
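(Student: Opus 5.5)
The plan is to specialise Theorem~\ref{thm:chua} to $a = 3$ and $Q = W_p$, assumed prime. Then $D = a^2 - 1 = 8$ and $\omega_3 = 3 + \sqrt{8} = 3 + 2\sqrttwo$. We also have $2(a+1) = 8$, so the sign is $\delta = (8/W_p) = (2/W_p)$. The ring $\Z[\sqrt{8}] = \Z[2\sqrttwo]$ sits inside $\Z[\sqrttwo]$. Since $\omega_3$ lies in the smaller ring, a congruence modulo $W_p$ there gives the same congruence in $\Z[\sqrttwo]$, because $W_p\Z[\sqrt 8] \subseteq W_p\Z[\sqrttwo]$. So the statement of Proposition~\ref{prop:cond2} follows from the output of Theorem~\ref{thm:chua}, and the remaining work is checking its hypotheses and computing the two Legendre symbols.

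First I verify the hypothesis $\gcd(a^2 - 1, Q) = \gcd(8, W_p) = 1$. This holds because $W_p$ is odd, and $W_p$ is an odd prime since $W_p \geq W_5 = 11$.

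Next I compute the symbols using Lemma~\ref{lem:mod8}, which gives $W_p \equiv 3 \pmod 8$. By the second supplementary law, $(2/Q) = -1$ exactly when $Q \equiv \pm 3 \pmod 8$. Hence $\varepsilon = (8/W_p) = (2/W_p)^3 = (2/W_p) = -1$. Likewise $\delta = (8/W_p) = -1$.

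Substituting into Theorem~\ref{thm:chua} gives $(Q - \varepsilon)/2 = (W_p + 1)/2$. This yields $\omega_3^{(W_p+1)/2} \equiv -1 \pmod{W_p}$, which is the claim.

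There is no real obstacle here. The only point needing care is the ring: Theorem~\ref{thm:chua} is stated in $\Z[\sqrt{D}]$ with $D = 8$, while the proposition is phrased in $\Z[\sqrttwo]$. This is handled by the inclusion of ideals noted above.
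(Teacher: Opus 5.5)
Your proposal is correct and takes the same route as the paper. You specialise Theorem~\ref{thm:chua} to $a=3$, $Q=W_p$, and observe $D = 2(a+1) = 8$, so $\varepsilon=\delta=(2/W_p)=-1$ by Lemma~\ref{lem:mod8}. Your explicit checks of the hypothesis $\gcd(8,W_p)=1$ and of the passage from $\Z[\sqrt{8}]$ to $\Z[\sqrttwo]$ are details the paper leaves implicit, and you handle both correctly.
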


\begin{proof}
For $a = 3$: $D = a^2 - 1 = 8$ and $2(a + 1) = 8$, so $\varepsilon =
\delta = (8/W_p) = (2/W_p)^3 = (2/W_p)$. By
Lemma~\ref{lem:mod8}, $W_p \equiv 3 \pmod 8$, so the second
supplement to quadratic reciprocity gives $(2/W_p) = -1$; hence
$\varepsilon = \delta = -1$. Theorem~\ref{thm:chua} with $Q = W_p$
yields $\wom^{(W_p + 1)/2} \equiv -1 \pmod{W_p}$.
\end{proof}

\begin{remark}[Role of Condition~(II) in this paper]
\label{rem:cond2-auxiliary}
Proposition~\ref{prop:cond2} is not used to deduce primality in this
paper; it is verified at each of the three target $W_p$ only as an
independent check on the $\Z[\sqrttwo]$ arithmetic used in the
supplementary code. Primality is established by
Theorem~\ref{thm:BLSN-1} alone.
\end{remark}

\begin{remark}[Uniqueness]
\label{rem:uniqueness}
The base $a = 3$ is the unique integer $a \geq 2$ with $a^2 - 1$ a
power of $2$. Indeed, $(a-1)(a+1) = 2^k$ with $a - 1, a + 1$ two
consecutive even numbers forces $\gcd(a-1, a+1) = 2$, so both are
powers of $2$ differing by $2$ --- hence $\{a-1, a+1\} = \{2, 4\}$
and $a = 3$. Consequently, $a = 3$ is the unique Chua base for which
both $\varepsilon = (D/Q)$ and $\delta = (2(a+1)/Q)$ are controlled by
the single Jacobi symbol $(2/Q)$ --- in particular, determined by
$Q \bmod 8$ alone. Other bases $a$ give conditions whose truth depends
on residues of $Q$ modulo further prime divisors of $a^2 - 1$; only
$a = 3$ yields a condition universal across $W_p$ (and more generally
across all $Q \equiv 3 \pmod 8$).
\end{remark}

\section{The BLS $N - 1$ theorem}
\label{sec:criteria}

The primality proofs of Section~\ref{sec:proofs} rely on the classical
BLS $N - 1$ theorem, which we recall here.

\begin{theorem}[{BLS $N-1$, \cite[Theorem~5 and Theorem~11]{BLS75}}]
\label{thm:BLSN-1}
Let $N > 1$ be an odd integer. Write $N - 1 = F R$ with $F > 1$,
$\gcd(F, R) = 1$, and $F$ completely factored into known primes.
Suppose for each prime $q \mid F$ there exists an integer $a_q$ with
\[
  a_q^{N-1} \;\equiv\; 1 \pmod N \qquad\text{and}\qquad
  \gcd(a_q^{(N-1)/q} - 1,\; N) \;=\; 1.
\]
If $F > \sqrt N$, then $N$ is prime. More generally, if $F^3 > N$ (i.e.
$F > N^{1/3}$), write $N = m F + s + 1$ with $0 \leq s < F$ and set
$\Delta = s^2 - 8\, m$; if $\Delta$ is not a perfect square
(in particular, if $\Delta < 0$), then $N$ is prime.
\end{theorem}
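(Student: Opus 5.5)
The plan is in two stages: a Pocklington-type lemma that pins down every prime divisor of $N$ modulo $F$, followed by a counting argument showing that a composite $N$ would force the discriminant to be a perfect square.

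\emph{Stage 1 (Pocklington).} Let $r$ be any prime divisor of $N$ and $q^e \parallel F$. From $a_q^{N-1}\equiv 1 \pmod r$ and $a_q^{(N-1)/q}\not\equiv 1 \pmod r$ (the latter because of the gcd condition), the order of $a_q$ modulo $r$ divides $N-1$ but not $(N-1)/q$. Hence $q^{e}\mid \ord_r(a_q)\mid r-1$, using $\gcd(F,R)=1$ so that $v_q(N-1)=e$. Ranging over all $q\mid F$ gives $F\mid r-1$, i.e.\ every prime divisor of $N$ satisfies $r\equiv 1\pmod F$. If $F>\sqrt N$, a composite $N$ has a prime factor $r\le\sqrt N<F+1$, which contradicts $r\equiv 1 \pmod F$. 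This settles the first claim.

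\emph{Stage 2 (cube-root range).} Assume $F^3>N$ and that $N$ is composite. Every prime factor is at least $F+1>N^{1/3}$, so $N$ has exactly two prime factors counted with multiplicity, $N=(aF+1)(bF+1)$ with $a,b\ge1$. Then $ab F^2<N<F^3$ gives $ab<F$, so $a+b\le ab+1\le F$. Expanding,
\[
(N-1)/F = abF + (a+b).
\]
I would compare this with the representation in the statement. I read the decomposition "$N=mF+s+1$" as the BLS normalisation of the cofactor, namely $R=(N-1)/F = 2Fm+s$ with $0\le s<2F$, which is what makes $\Delta=s^2-8m$ meaningful. Since $a+b<2F$, reducing $abF+(a+b)$ modulo $2F$ gives two cases.
\begin{itemize}
\item If $ab$ is even, then $s=a+b$ and $m=ab/2$, so $\Delta=(a+b)^2-4ab=(a-b)^2$ is a perfect square.
\item If $ab$ is odd, the leftover $F$ has to be absorbed into $s$.
\end{itemize}
In either case $\Delta$ being a non-square (for instance negative) gives the contradiction. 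The final step is to note that $\Delta<0$ is never a square, so that special case is covered automatically.

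\emph{Main obstacle.} I expect the hard part to be the parity bookkeeping in Stage 2, i.e.\ the case $ab$ odd. Here I would invoke the parity constraints behind BLS Theorem~11: $N$ is odd, so $2\mid N-1$, and one may take $2\mid F$. This forces $aF+1$ and $bF+1$ to be odd automatically, and one then checks that either $s=a+b$ with $m=ab/2$ exactly as above, or the alternative representation leads to an explicit contradiction with $a+b\le F$. I would follow the case analysis of \cite[Theorem~11]{BLS75} here rather than reinvent it. I would also flag that the normalisation "$N=mF+s+1$, $0\le s<F$" as printed makes $s=0$ identically, so it must be interpreted as the decomposition of $R$ modulo $2F$ for the discriminant test to make sense.
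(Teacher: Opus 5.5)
\textbf{Verdict: the proposal has a gap in Stage~2 (the case $ab$ odd), but the missing step is a short parity argument.}

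Stage~1 is correct. In Stage~2, the reduction to $N=(aF+1)(bF+1)$ with $ab<F$ and $a+b\le F$ is correct. Your reading of the normalisation as $R=2Fm+s$ with $0\le s<2F$ (BLS's $R_1=2F_1s+r$) is not just convenient; it is forced, because the statement as printed is false. Take $N=121$, $F=5$, $R=24$, $a_5=3$. Then $3^{5}\equiv 1\pmod{121}$, $\gcd(3^{24}-1,121)=\gcd(80,121)=1$, and $F^3=125>N$. The literal normalisation gives $\Delta=-192$, yet $N=11^2$. Your normalisation gives $R=10\cdot 2+4$ and $\Delta=0$, a square, as it should. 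The paper gives no proof of its own, only the citation to BLS.

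The gap is the case $ab$ odd. You defer it to the reference, and the mechanism you sketch would not close it.
\begin{enumerate}
\item ``One may take $2\mid F$'' is not among the hypotheses. $F$ is given and may be odd, as in the example above, and the discriminant is defined in terms of that $F$.
\item No size argument can dispose of this case. If $ab$ were odd and $a+b<F$, then $m=(ab-1)/2$, $s=F+a+b<2F$ is a legitimate representation. It gives $\Delta=(F+a+b)^2-4ab+4$, which need not be a square. Nothing here conflicts with $a+b\le F$.
\end{enumerate}

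The missing idea is that $ab$ odd cannot occur, for parity reasons.
\begin{itemize}
\item If $F$ is even, then $\gcd(F,R)=1$ forces $R$ odd. Since $R=abF+a+b\equiv a+b\pmod 2$, one of $a,b$ is even.
\item If $F$ is odd, then $aF+1$ and $bF+1$ divide the odd number $N$, so $a$ and $b$ are both even.
\end{itemize}
Either way $ab$ is even, and your first bullet finishes the proof. This is exactly where the hypotheses ``$N$ odd'' and $\gcd(F,R)=1$ enter Stage~2, and why the reduction is taken modulo $2F$ with the constant $8$ in $\Delta$.
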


The $F > N^{1/3}$ threshold with the discriminant check is BLS's
central improvement over the classical $F > N^{1/2}$: instead of
needing half the bit-size of $N$ to be factored, we need only a third.
The quantity reported in the margin column of
Table~\ref{tab:summary} is
$M := \lfloor \log_2 F^3 \rfloor - \lfloor \log_2 N \rfloor$,
the integer bit-count by which $F^3$ exceeds $N$ in bit-length; $M
\geq 1$ already implies $F^3 > N$. For the three exponents below,
$M \in \{46,\, 3261,\, 2860\}$, so the threshold is met with
considerable slack.

\section{The three primality proofs}
\label{sec:proofs}

\subsection{Framework}

The three proofs in \S\S\ref{ssec:w2617}--\ref{ssec:w12391} invoke
Theorem~\ref{thm:BLSN-1} (BLS $N - 1$) with specific factorisations of
$N - 1 = 2(2^{p-1} - 1)/3$ drawn from the cyclotomic decomposition
\[
  2^{p-1} - 1 \;=\; \prod_{d \mid p - 1} \Phi_d(2),
\]
harvested partly from the Cunningham project tables~\cite{Cunningham}
and partly from direct computation. Each factor of each $\Phi_d(2)$
contributes to the BLS factored portion $F$ once its primality
is certified by APR-CL~\cite{APR,CohenLenstraAlgo}; Fermat-type witnesses
$a \in \{2, 3, 5, \ldots\}$ are then validated for each prime $q \mid
F$. The exact software toolchain and version numbers used to produce
the certificates are documented in Section~\ref{sec:comp}. The proofs
are unconditional and do not rely on the Chebyshev sufficiency
conjecture.

At each $p$, we additionally verify the Chua $N + 1$ condition
$\wom^{(W_p + 1)/2} \equiv -1 \pmod{W_p}$
(Proposition~\ref{prop:cond2}) as an independent implementation check:
BLS uses only $\Z$ arithmetic, whereas Condition~(II) exercises
$\Z[\sqrttwo]$ ring multiplication, a distinct code path in the
supplementary software. The Chua congruence is recorded only as an
auxiliary check; primality is established solely via
Theorem~\ref{thm:BLSN-1} (see Remark~\ref{rem:cond2-auxiliary}).

APR-CL certification is applied uniformly to every prime $q$ of $F$,
regardless of size --- the smallest single-digit primes receive the
same unconditional APR-CL treatment as the largest multi-hundred-digit
factors.

\begin{table}[h]
\centering
\begin{tabular}{rrrrrl}
\toprule
$p$ & $W_p$ digits & $\tau(p{-}1)$ & primes in $F$ & $M$ (bits) & wall time \\
\midrule
$ 2617$ &  $788$ & $16$ &  $22$ &   $46$ & ${<}\,10$\,s \\
$10501$ & $3161$ & $48$ & $103$ & $3261$ & $98$\,s \\
$12391$ & $3730$ & $32$ & $ 61$ & $2860$ & $82$\,s \\
\bottomrule
\end{tabular}
\caption{Summary of the three BLS $N - 1$ proofs. $\tau(p - 1)$ is
the number of divisors of $p - 1$; ``primes in $F$'' is the number of
distinct primes in the factored portion; wall times are single-core
on commodity hardware, see Section~\ref{sec:comp}. Full per-cofactor
data is in the JSON certificates.}
\label{tab:summary}
\end{table}

Per-proof data is archived as JSON certificates (one per
exponent); see Section~\ref{sec:comp}.

\subsection{$W_{2617}$}
\label{ssec:w2617}

\begin{theorem}
\label{thm:w2617}
The Wagstaff number $W_{2617}$ is prime.
\end{theorem}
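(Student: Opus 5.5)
The plan is to apply Theorem~\ref{thm:BLSN-1} with $N = W_{2617}$. Here
\[
  N - 1 = \frac{2(2^{2616}-1)}{3}, \qquad
  2616 = 2^3\cdot 3\cdot 109 .
\]
So $p-1$ has $\tau(2616)=16$ divisors, and
\[
  2^{2616}-1=\prod_{d\mid 2616}\Phi_d(2).
\]
The factor $3$ lives in $\Phi_2(2)=3$ and is removed by the division. The factor $2$ in front supplies the prime $2$.

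\emph{Step 1: assemble $F$.} I would collect every fully factored $\Phi_d(2)$:
\begin{itemize}
\item the small ones: $d\in\{1,2,3,4,6,8,12,24\}$, together with $\Phi_{109}(2)$ and $\Phi_{218}(2)$;
\item any further $\Phi_d(2)$ for $d\in\{327,436,654,872,1308,2616\}$ whose complete factorisation appears in the Cunningham tables (base $2$, tables $2^n\pm1$ and the Aurifeuillian splits $2^{4k-2}+1=L\cdot M$, which help for $d=436,872,\ldots$).
\end{itemize}
For the largest, only partially factored cyclotomic pieces, I would include just the certified prime factors and leave the composite cofactor in $R$.

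Then I would set $F$ equal to the product of all certified prime powers dividing $N-1$, making sure each prime appears to its full exponent in $N-1$ so that $\gcd(F,R)=1$. Every prime $q\mid F$, including the large cofactors, gets an APR-CL certificate. The table indicates $22$ distinct primes.

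\emph{Step 2: size check.} I would compute $\log_2 F$ and confirm $3\log_2 F>\log_2 N\approx 2615$, i.e.\ roughly $\log_2 F\gtrsim 872$. With the reported margin $M=46$, $F$ has about $887$ bits. This is below $\sqrt N$, so the cube-root branch of Theorem~\ref{thm:BLSN-1} is needed.

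Next I write $N=mF+s+1$ with $0\le s<F$, form $\Delta=s^2-8m$, and check that $\Delta$ is not a perfect square. If $\Delta<0$ this is immediate. Otherwise I compute $\lfloor\sqrt\Delta\rfloor^2$ and compare it with $\Delta$.

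\emph{Step 3: witnesses.} For each of the $22$ primes $q\mid F$, I search $a\in\{2,3,5,\ldots\}$ for one satisfying
\[
  a^{N-1}\equiv1 \pmod N, \qquad \gcd\bigl(a^{(N-1)/q}-1,N\bigr)=1 .
\]
The base $a=2$ is useless here: $2^{2617}\equiv -1 \pmod{N}$, so the order of $2$ divides $2\cdot 2617$. So I would start from $a=3$. Theorem~\ref{thm:BLSN-1} then yields primality. As a sanity check I would also verify Proposition~\ref{prop:cond2}'s congruence, though it is not used in the proof.

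\emph{Main obstacle.} The hard part is Step 1: getting enough fully factored, certifiable content. The divisor lattice of $2616$ is small ($16$ divisors), and the big pieces $\Phi_{2616}(2)$, $\Phi_{1308}(2)$, and $\Phi_{872}(2)$ dominate the size. The margin of $46$ bits is thin, so the proof hinges on at least one or two of these large cyclotomic values being completely factored, with a several-hundred-digit prime cofactor certified by APR-CL.

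My first move would be to consult the Cunningham entries for $2,1308\mathrm{L/M}$ and $2,872\mathrm{L/M}$ and run ECM on any remaining cofactors. Failing that, I would look for small factors of the remaining composite parts via ECM and trial division, to push $\log_2F$ past $N^{1/3}$.
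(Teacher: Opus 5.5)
Your framework matches the paper's: BLS $N-1$ in the cube-root branch, $F$ harvested from $2^{2616}-1=\prod_{d\mid 2616}\Phi_d(2)$, APR-CL on every prime of $F$, Fermat-type witnesses, and a non-square $\Delta$.

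\textbf{Main error.} Your feasibility analysis misidentifies what must be factored. Since $\log_2\Phi_d(2)\approx\varphi(d)$, the divisors $d\le 654$ of $2616$ carry
\[
  \sum_{d\mid 2616,\ d\le 654}\varphi(d)=1+1+2+2+2+4+4+8+108+108+216+216+216=888
\]
bits. So $F=2\prod_{1<d\le 654}\Phi_d(2)/3$ already has about $887$ bits, which is exactly the $887$ bits you back-computed from $M=46$. This is the paper's $F$. It uses only the $\Phi_d(2)$ with $d\le 654$, all factored by direct computation, giving $22$ primes, the largest of which has just $50$ digits. The pieces $d=872,1308,2616$ ($1728$ bits together) contribute nothing.

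Your claim that the proof hinges on completely factoring one or two of those large pieces, with a several-hundred-digit APR-CL cofactor, is therefore false. The real requirement is that all three mid-size values $\Phi_{327}(2)$, $\Phi_{436}(2)$, $\Phi_{654}(2)$ (about $65$ digits each) be fully factored. Your guaranteed list ($d\le 24$ plus $109,218$) gives only $\log_2F\approx 239$, far below the needed $872$. Omitting even one of the three loses $216$ bits of $F$, i.e.\ $648$ bits of $F^3$. You put these three in the conditional bucket alongside the large pieces; they belong in the guaranteed list, since they are easy to factor directly. The planned ECM work on $\Phi_{872}(2)$ and $\Phi_{1308}(2)$ is unnecessary.

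\textbf{Two smaller corrections.}

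First, base $2$ is not useless; it works for $q=2$. We have $v_2(N-1)=1$, and $2617\mid N-1$ by Fermat, since $3(N-1)=2^{2617}-2$. Hence $(N-1)/2=2617k$ with $k$ odd, so $2^{(N-1)/2}=(2^{2617})^k\equiv -1\pmod N$ and $\gcd(2^{(N-1)/2}-1,N)=\gcd(2,N)=1$. This is why the paper's witnesses lie in $\{2,3\}$. Your argument does correctly exclude $a=2$ for odd $q\neq 2617$.

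Second, the Aurifeuillian split $L\cdot M$ applies to $\Phi_d(2)$ only when $d\equiv 4\pmod 8$. That covers $d=436$ and $d=1308$ (Cunningham entries 2,218L/M and 2,654L/M), but not $d=872$.
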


\begin{proof}
Set $N = W_{2617}$, a $788$-digit number. We apply
Theorem~\ref{thm:BLSN-1} to the $N - 1$ side; the factored portion
$F$ will be shown to exceed $N^{1/3}$.

The exponent satisfies $p - 1 = 2616 = 2^3 \cdot 3 \cdot 109$, with
$16$ divisors. The cyclotomic decomposition
\[
  2^{p-1} - 1 \;=\; \prod_{d \mid p-1} \Phi_d(2)
\]
gives $16$ factors, of which $12$ (those with $d \leq 654$) are fully
factored by direct computation. Their prime factors, together with the
external factor $2$ of $N - 1 = 2(2^{p-1} - 1)/3$, constitute the BLS
factored portion $F$; it contains $22$ distinct primes and satisfies
$M = 46$ (Table~\ref{tab:summary}), well above the $F > N^{1/3}$
threshold.

For each prime $q \mid F$ (the $22$ primes constituting the factored
portion), the witness $a_q \in \{2, 3\}$ is tested:
$a_q^{N-1} \equiv 1 \pmod N$ and $\gcd(a_q^{(N-1)/q} - 1, N) = 1$.
Every witness succeeds (full data in
\texttt{bls\_certificate\_w2617.json}; digest
\texttt{823f\ldots c719} in
Appendix~\ref{sec:digests}). Writing $N = m F + s + 1$
with $0 \leq s < F$ and $\Delta = s^2 - 8\, m$, the discriminant
$\Delta$ is not a perfect square.

By Theorem~\ref{thm:BLSN-1}, $N = W_{2617}$ is prime.
\end{proof}

\subsection{$W_{10501}$}
\label{ssec:w10501}

\begin{theorem}
\label{thm:w10501}
The Wagstaff number $W_{10501}$ is prime.
\end{theorem}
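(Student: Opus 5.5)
The plan mirrors the proof of Theorem~\ref{thm:w2617}: set $N = W_{10501}$, a $3161$-digit number, and apply Theorem~\ref{thm:BLSN-1} to the $N-1$ side. Here $N - 1 = 2(2^{p-1}-1)/3$, and the exponent factors as $p - 1 = 10500 = 2^2 \cdot 3 \cdot 5^3 \cdot 7$. This gives $\tau(10500) = 3\cdot 2\cdot 4\cdot 2 = 48$ divisors, hence $48$ cyclotomic factors in
\[
  2^{10500} - 1 \;=\; \prod_{d \mid 10500} \Phi_d(2).
\]
This smoothness is the point: the factors $\Phi_d(2)$ have bit-size about $\varphi(d)$, and many divisors $d$ are small or have small $\varphi(d)$. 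So a large share of the roughly $10500$ bits of $N-1$ is spread over pieces small enough to factor completely.

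\textbf{Assembling $F$.} For each $d \mid 10500$, I would take the Cunningham table entry for $2^d - 1$ (primitive part $\Phi_d(2)$). Where it is listed as complete, I re-certify every listed prime with APR-CL, and I confirm by multiplication that the listed primes reproduce $\Phi_d(2)$ exactly, with multiplicities. Where an entry is incomplete, I drop that $\Phi_d(2)$ from $F$; alternatively I keep only its certified primes, dividing out their full powers and leaving the composite cofactor in $R$. I then do three things:
\begin{itemize}
\item remove the factor $3$ (which divides $\Phi_2(2)=3$), since it is cancelled by the $/3$;
\item adjoin the external prime $2$;
\item take $F$ to be the product of the full prime-power parts, so that $\gcd(F,R)=1$ holds automatically.
\end{itemize}
The expected count is $103$ distinct primes, with margin $M = 3261$ bits, far above $F^3 > N$. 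That margin in fact indicates $F > \sqrt N$, in which case the discriminant check is not even needed, though I would still record it.

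\textbf{Witnesses.} For each of the $103$ primes $q \mid F$, I search small bases $a \in \{2,3,5,\dots\}$ until
\[
  a^{N-1} \equiv 1 \pmod N \qquad\text{and}\qquad \gcd\bigl(a^{(N-1)/q}-1, N\bigr) = 1 .
\]
Base $2$ deserves a caution: $2^{p} \equiv -1 \pmod N$, so the order of $2$ modulo $N$ divides $2p$, which is coprime to most $q$. Base $2$ therefore fails for $q=2$ and generally for every $q \nmid 2p$. So I expect to use base $3$ or larger throughout, which is cheap. Finally I compute $m, s$ with $N = mF+s+1$ and check that $\Delta = s^2 - 8m$ is not a square (or simply note $F>\sqrt N$). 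Theorem~\ref{thm:BLSN-1} then gives primality.

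\textbf{Main obstacle.} The hard part is not the BLS arithmetic but obtaining enough fully factored $\Phi_d(2)$ and certifying the large cofactors. Some primitive parts, say for $d$ around $1500$--$3500$, have prime cofactors of several hundred digits. Their APR-CL proofs dominate the cost and must be checked carefully against the Cunningham data. If coverage at some $d$ fails, my first fallback is to discard that $\Phi_d(2)$ and confirm the remaining $F$ still exceeds $N^{1/3}$; given the $3261$-bit slack, losing a few large-$d$ factors should be harmless.
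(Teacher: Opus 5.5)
Your route is the paper's: harvest $F$ from $2^{10500}-1=\prod_{d\mid 10500}\Phi_d(2)$, certify every prime of $F$ by APR-CL, find Pocklington witnesses, and conclude with the cube-root form of Theorem~\ref{thm:BLSN-1}. The paper gets $38$ of its $43$ complete factorisations from SymPy rather than Cunningham, which is immaterial.

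\textbf{The $F>\sqrt N$ claim is false.} Your assertion that the $3261$-bit margin gives $F>\sqrt N$ is wrong, and it would license skipping the step that carries the proof.
\begin{itemize}
\item The margin $M$ compares $F^3$ with $N$. So $\log_2 F\approx(10499+3261)/3\approx 4587$, while $\log_2\sqrt N\approx 5250$. Equivalently, $F$ has $1381$ digits against about $1581$ for $\sqrt N$.
\item Thus $N^{1/3}<F<\sqrt N$. The witnesses only show that every prime factor of $N$ is $\equiv 1\pmod F$, hence that $N$ has at most two prime factors.
\item So the discriminant test is indispensable, and the alternative ``or simply note $F>\sqrt N$'' must be deleted.
\end{itemize}

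\textbf{The discriminant test must be the genuine BLS one.} Read literally, the normalisation $N=mF+s+1$ with $0\le s<F$ that you took from Theorem~\ref{thm:BLSN-1} forces $s=0$, because $F\mid N-1$. Then $\Delta=-8m<0$ and the check certifies nothing. The correct form is as follows. With $F$ even and $R=(N-1)/F$ odd, write $R=2mF+s$ with $0\le s<2F$, and check that $m=0$ or that $s^2-8m$ is not a square. This works because a composite $N=(aF+1)(bF+1)$ has:
\begin{itemize}
\item $ab<F$, since $F^3>N$;
\item $ab$ even, since $R=abF+a+b$ is odd;
\item therefore $m=ab/2\ge 1$, $s=a+b\le ab+1<2F$, and $s^2-8m=(a-b)^2$, a perfect square.
\end{itemize}

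\textbf{A smaller slip: base $2$ does not fail for $q=2$.} Since $(N-1)/2=(2^{p-1}-1)/3$ is an odd multiple of $p$ and $2^p\equiv -1\pmod N$, you get $2^{(N-1)/2}\equiv -1\pmod N$. Hence $\gcd(2^{(N-1)/2}-1,N)=\gcd(2,N)=1$, consistent with the paper's witness set $\{2,3,5\}$. Because the witness search is mechanical, this affects only your expectations, not the proof.
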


\begin{proof}
Set $N = W_{10501}$, a $3161$-digit number. The exponent $p - 1 =
10500 = 2^2 \cdot 3 \cdot 5^3 \cdot 7$ has $48$ divisors. Of the $48$
cyclotomic factors $\Phi_d(2)$ of $2^{p-1} - 1$, $43$ are fully
factored --- $38$ by direct computation (using SymPy
\texttt{factorint} with Pollard $\rho$, $p - 1$, and ECM) and $5$
retrieved from the Cunningham project tables~\cite{Cunningham}. This
contributes $103$ primes to the BLS factored portion $F$, each
certified prime by APR-CL~\cite{APR,CohenLenstraAlgo}.

The factored portion satisfies $F^3 > N$ by a margin of $3261$
bits, substantially exceeding the BLS threshold. Fermat-type
witnesses $a_q \in \{2, 3, 5\}$ are validated for each of the $103$
primes $q \mid F$; every witness succeeds. The discriminant
$\Delta = s^2 - 8\, m$ (with $N = m F + s + 1$, $0 \leq s < F$) is
computed and verified non-square. Full data in
\texttt{bls\_certificate\_w10501.json}; digest \texttt{c7b3\ldots 9ea5}
in Appendix~\ref{sec:digests}.

By Theorem~\ref{thm:BLSN-1}, $N = W_{10501}$ is prime.
\end{proof}

\subsection{$W_{12391}$}
\label{ssec:w12391}

\begin{theorem}
\label{thm:w12391}
The Wagstaff number $W_{12391}$ is prime.
\end{theorem}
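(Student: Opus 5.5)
We follow the template of Theorems~\ref{thm:w2617} and~\ref{thm:w10501} and apply Theorem~\ref{thm:BLSN-1} on the $N-1$ side with $N = W_{12391}$, a $3730$-digit number. The first step is arithmetic in the exponent. We have $p - 1 = 12390 = 2 \cdot 3 \cdot 5 \cdot 7 \cdot 59$, which is squarefree with five prime factors, so $\tau(p-1) = 32$, matching Table~\ref{tab:summary}. Hence
\[
  N - 1 \;=\; \frac{2\,(2^{p-1}-1)}{3} \;=\; \frac{2}{3}\prod_{d \mid 12390} \Phi_d(2),
\]
and the factor $3$ is absorbed by $\Phi_2(2) = 3$. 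So $N - 1$ is $2$ times the product of the $\Phi_d(2)$ over the $31$ divisors $d > 2$.

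Next we harvest the factored part. For each divisor $d$ we take the prime factorisation of $\Phi_d(2)$ where one is available. The small-$d$ factors we factor directly (trial division, Pollard $\rho$, $p-1$, ECM); those with $d$ up to several hundred or a few thousand we take from the Cunningham tables for $2^n - 1$, re-verifying each retrieved prime by APR-CL. We then let $F$ be the product, with full multiplicity, of the prime powers of $N-1$ supported on these primes together with the external $2$. This makes $\gcd(F, R) = 1$ automatically, since any prime dividing an unfactored $\Phi_d(2)$ either appears in $F$ with its full exponent or lies entirely in $R$.

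We then check size: $M = \lfloor \log_2 F^3 \rfloor - \lfloor \log_2 N \rfloor \ge 1$. Because $N \approx 2^{12389}$, we need $\log_2 F$ of roughly $4130$ bits. The table reports a margin of $2860$ bits, so about $5000$ bits of factored content are needed. The largest cyclotomic pieces are $\Phi_{12390}(2)$, $\Phi_{6195}(2)$, $\Phi_{4130}(2)$ and so on, each of degree $\varphi(d)$ up to $2784$. These are the ones we expect to leave as the unfactored cofactor $R$. The many mid-size pieces, such as $\Phi_{2065}$, $\Phi_{1770}$, $\Phi_{1239}$ and $\Phi_{885}$, supply the bulk of $F$.

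Finally we run the certificate steps. For each of the $61$ primes $q \mid F$ we search witnesses $a_q \in \{2,3,5,\dots\}$ satisfying $a_q^{N-1} \equiv 1$ and $\gcd(a_q^{(N-1)/q}-1, N) = 1$. We also write $N = mF + s + 1$ and check that $\Delta = s^2 - 8m$ is not a square. Since the margin is large, $F > \sqrt N$ may in fact already hold, in which case the discriminant step is superfluous, though we still record it. Theorem~\ref{thm:BLSN-1} then gives primality.

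The main obstacle is the harvesting step. It requires obtaining complete factorisations of enough mid-degree $\Phi_d(2)$, several hundred to over a thousand digits each, to reach the $N^{1/3}$ threshold. It also requires certifying every large cofactor prime by APR-CL, which is feasible at around $1000$ digits. The modular exponentiations are routine by comparison. If coverage proved short, we would first try swapping in other Cunningham entries or running extra ECM on the nearest-to-complete $\Phi_d(2)$.
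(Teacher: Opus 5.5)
Your proposal is essentially the paper's proof. It applies BLS $N-1$ to $N-1=2\prod_{d\mid 12390,\,d\neq 2}\Phi_d(2)$, assembles $F$ at full multiplicity from the completely factored $\Phi_d(2)$, and finishes with APR-CL on each of the $61$ primes, Fermat witnesses, and the discriminant test. Two factual corrections: the $2860$-bit margin gives $\log_2 F\approx 5083<\tfrac12\log_2 N\approx 6195$, so $F<\sqrt N$ and the discriminant check is essential rather than superfluous; and $\Phi_{2065}(2)$ and $\Phi_{2478}(2)$, which supply over $2000$ of those bits (without them $F^3<N$), are not in the Cunningham tables used --- the paper obtained them via FactorDB and re-certified every factor.
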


\begin{proof}
Set $N = W_{12391}$, a $3730$-digit number. The exponent $p - 1 =
12390 = 2 \cdot 3 \cdot 5 \cdot 7 \cdot 59$ has $32$ divisors. Of the
$32$ cyclotomic factors $\Phi_d(2)$, $27$ are fully factored: $19$
by direct computation, $5$ retrieved from the Cunningham
tables~\cite{Cunningham}, $2$ retrieved from FactorDB~\cite{FactorDB}
(used as a discovery aid only --- every such factor is re-certified
by APR-CL~\cite{APR,CohenLenstraAlgo} before entering $F$), and $1$
itself prime. This contributes $61$ primes to $F$, the
largest being a $371$-digit prime cofactor of $\Phi_{2065}(2)$. Each
of the $61$ primes receives independent APR-CL certification.

The factored portion $F$ satisfies $F^3 > N$ by a $2860$-bit
margin. Fermat-type witnesses $a_q \in \{2, 3\}$ are validated for
each of the $61$ primes; every witness succeeds. The discriminant
$\Delta = s^2 - 8\, m$ (with $N = m F + s + 1$, $0 \leq s < F$) is
non-square. Full data in \texttt{bls\_certificate\_w12391.json};
digest \texttt{1b0d\ldots 3570} in Appendix~\ref{sec:digests}.

By Theorem~\ref{thm:BLSN-1}, $N = W_{12391}$ is prime.
\end{proof}

\subsection{Limits of the method}

\begin{remark}[BLS ceiling]
\label{rem:bls-ceiling}
The BLS $N - 1$ approach requires $p - 1$ to be sufficiently smooth
that enough cyclotomic factors $\Phi_d(2)$ with $d \mid p - 1$ have
known factorisations. Table~\ref{tab:ceiling} tabulates $p - 1$ for
every Wagstaff prime and probable prime with $p > 2617$ through the
present computational horizon $p \leq 267017$. A divisor
$d \mid p - 1$ contributes to $F$ only if $\Phi_d(2)$ is completely
factored (in Cunningham tables, FactorDB, or by direct computation).
For most of the exponents listed, $p - 1$ has a large prime factor
$\ell$, forcing the divisor $d = \ell$ (or $d = 2 \ell$) to dominate
$2^{p-1} - 1$, and no general-purpose factorisation of the
corresponding $\Phi_d(2)$ or $\Phi_{2\ell}(2)$ is known at these sizes.
For the factorisation data consulted here, only $p = 10501$ and
$p = 12391$ yield a factored portion large enough for the present BLS
$N - 1$ approach; $p = 14479$ is the closest near-miss.

\begin{table}[h]
\centering
\begin{tabular}{rlc}
\toprule
$p$ & $p - 1$ factorisation & BLS $N - 1$ feasible? \\
\midrule
$ 3539$ & $2 \cdot 29 \cdot 61$                      & no: $\Phi_{1769}(2) \sim 2^{1680}$ unfactored \\
$ 5807$ & $2 \cdot 2903$ \; ($2903$ prime)           & no: $\Phi_{2903}(2) \sim 2^{2902}$ unfactored \\
$10501$ & $2^2 \cdot 3 \cdot 5^3 \cdot 7$            & \textbf{yes} (proved, §\ref{ssec:w10501}) \\
$10691$ & $2 \cdot 5 \cdot 1069$ \; ($1069$ prime)   & no: $\Phi_{1069}(2) \sim 2^{1068}$ unfactored \\
$11279$ & $2 \cdot 5639$ \; ($5639$ prime)           & no: $\Phi_{5639}(2) \sim 2^{5638}$ unfactored \\
$12391$ & $2 \cdot 3 \cdot 5 \cdot 7 \cdot 59$       & \textbf{yes} (proved, §\ref{ssec:w12391}) \\
$14479$ & $2 \cdot 3 \cdot 19 \cdot 127$             & no: $\Phi_{14478}(2)$ only partially factored \\
$42737$ & $2^4 \cdot 2671$ \; ($2671$ prime)         & no: $\Phi_{2671}(2) \sim 2^{2670}$ unfactored \\
\midrule
$83339$  & $2 \cdot 41669$ \; (prime)                & no (PRP; large prime in $p-1$) \\
$95369$  & $2^3 \cdot 7 \cdot 1703$                  & no (PRP; $\Phi_{1703}(2)$ unfactored) \\
$117239$ & $2 \cdot 58619$ \; (prime)                & no (PRP; large prime in $p-1$) \\
$127031$ & $2 \cdot 5 \cdot 12703$ \; (prime)        & no (PRP; large prime in $p-1$) \\
$138937$ & $2^3 \cdot 3 \cdot 7 \cdot 827$           & no (PRP; $\Phi_{827}(2)$ partial) \\
$141079$ & $2 \cdot 3 \cdot 7 \cdot 3359$ \; (prime) & no (PRP; large prime in $p-1$) \\
$267017$ & $2^3 \cdot 33377$ \; (prime)              & no (PRP; large prime in $p-1$) \\
\bottomrule
\end{tabular}
\caption{Smoothness of $p - 1$ for every Wagstaff prime or probable
prime with $2617 < p \leq 267017$. The eight proved Wagstaff primes
above $2617$ are listed first, followed by the seven probable primes
above the current proving horizon. A detailed computation of the bit
contribution from each divisor $d \mid p - 1$ is recorded in the JSON
certificates for the two feasible cases; for the others, the listed
large prime factor of $p - 1$ forces a $\Phi_d(2)$ whose full
factorisation is currently out of reach.}
\label{tab:ceiling}
\end{table}

Extensions of the Cunningham tables~\cite{Cunningham} --- or direct
ECM on the divisors $\Phi_d(2)$ for $d \mid p - 1$ with these target
exponents --- would push the feasibility frontier. The cofactor data
recorded in the JSON certificates (Section~\ref{sec:comp}) identifies
exactly which $\Phi_d(2)$ remain only partially factored for
$p = 10501$ and $p = 12391$; these are natural targets for further
ECM work.
\end{remark}

\section{Computational details}
\label{sec:comp}

\subsection{Software toolchain}

All runs reported in this paper used Python~3.12.3, SymPy~1.14.0,
PARI/GP~2.15.4 (linked against GMP~6.3.0), and gmpy2~2.3.0 on
Ubuntu~24.04 LTS (AMD Ryzen~5 3600, 64~GB RAM); the scripts in the
supplementary repository are compatible with Python~$\geq$~3.10,
SymPy~$\geq$~1.12, PARI/GP~$\geq$~2.15, and gmpy2~$\geq$~2.1, and use
only free, open-source software.

\begin{itemize}
\item \emph{Integer factorisation} of cyclotomic factors $\Phi_d(2)$:
  SymPy's \texttt{factorint} (combining trial division, Pollard $\rho$,
  Pollard $p - 1$, and ECM internally), augmented for cyclotomic
  factors by trial division along the arithmetic progression $r \equiv
  1 \pmod{d}$ implied by primitive-divisor theory.

\item \emph{Pre-known large factorisations.} The Cunningham project
  tables~\cite{Cunningham} (accessed January--April 2026) supplied
  factorisations for cyclotomic factors $\Phi_d(2)$ with $d \leq
  1200$, and FactorDB~\cite{FactorDB} was used as a discovery aid for
  two larger terms ($\Phi_{2065}(2)$ and $\Phi_{2478}(2)$ in
  Theorem~\ref{thm:w12391}). Every retrieved factor is independently
  re-certified for primality by APR-CL before entering $F$, and the
  driver scripts re-check the product
  $\prod_i p_i^{e_i} = \Phi_d(2)$ exactly before accepting a
  factorisation; FactorDB plays no evidentiary role.

\item \emph{Primality of cofactors} (every prime used in any BLS
  certificate): PARI/GP \texttt{isprime(n, 2)}, which forces the
  APR-CL (Adleman--Pomerance--Rumely / Cohen--Lenstra) path
  \cite{APR,CohenLenstraAlgo}, implemented in PARI/GP along the lines
  of Bosma and van~der~Hulst~\cite{BosmaVanDerHulst}, and is
  deterministic. The largest
  cofactor is a $371$-digit prime factor of $\Phi_{2065}(2)$
  (Theorem~\ref{thm:w12391}); the smallest are single-digit primes,
  but all receive APR-CL certification for uniformity.

\item \emph{Fast modular arithmetic} for the exponentiations
  $a^{N-1} \bmod N$ and $\wom^{(N+1)/2} \bmod N$ at each of the three
  $W_p$: Python's built-in big-int \texttt{pow}, with GMP-backed
  acceleration via gmpy2 when installed.
\end{itemize}

\begin{remark}[Factor provenance]
\label{rem:provenance}
Each JSON certificate records a \texttt{factor\_provenance} field:
every prime $q \mid F$ is tagged with the source from which its
literal value first entered the driver. The labels used are
\texttt{algebraic} (the prime $2$ contributed by
$N - 1 = 2\,(2^{p-1} - 1)/3$), \texttt{cunningham} (factor drawn from
a Cunningham project table literal), \texttt{factordb} (obtained via
FactorDB lookup), \texttt{direct\_sympy} (produced by
\texttt{sympy.factorint} on $\Phi_d(2)$),
\texttt{trial\_div\_\allowbreak{}cyclotomic}
(discovered by trial division along the progression $r \equiv 1
\pmod{d}$), \texttt{cyclotomic\_prime} ($\Phi_d(2)$ itself prime), and
\texttt{residual\_prime\_\allowbreak{}aprcl}
(residual cofactor of $\Phi_d(2)$ after
removing known factors, verified prime before entering $F$).
Provenance is audit metadata only: every prime, regardless of label,
receives the same APR-CL certification before being admitted to $F$.
\end{remark}

The driver scripts in the supplementary repository (one per exponent
$p$) emit JSON certificates recording every cyclotomic factor, prime
decomposition, BLS witness, and primality method used, along with full
execution logs. A master verifier (\texttt{verify\_bls.py}) re-reads
each certificate from disk, treats every claim in it as untrusted
input, rebuilds $F$ from $v_q(N-1)$, re-runs APR-CL on every prime,
re-checks the witnesses and the discriminant, and re-runs
Condition~(II); any disagreement exits non-zero.

\subsection{Supplementary material}
\label{sec:supplementary}

All data and scripts are archived on Zenodo. The concept DOI
\url{https://doi.org/10.5281/zenodo.19643792} always resolves to the
latest version; the version-specific DOI for the release accompanying
this paper is \url{https://doi.org/10.5281/zenodo.19645478}. A
development mirror is hosted on GitHub at
\url{https://github.com/dolonet/wagstaff-bls-primality}; the GitHub
repository is a mirror, and the Zenodo deposit is the citable
artifact.

The archive contains, for each $p \in \{2617, 10501, 12391\}$:

\begin{enumerate}
\item the BLS primality certificate (JSON), recording every
  cyclotomic factor $\Phi_d(2)$, its prime decomposition, the BLS
  witness $a$ for each prime $q \mid F$, and the APR-CL timings for
  each cofactor;
\item the driver script \texttt{bls\_n\_minus\_1\_w<p>.py}
  reproducing the certificate end-to-end;
\item the full APR-CL primality log for each cofactor (each prime $q$
  is re-checked by piping the PARI/GP statement
  \texttt{isprime(<q>, 2)} to \texttt{gp -q});
\item the independent Condition~(II) verification log.
\end{enumerate}

\subsection{Verification protocol}

A referee can verify each primality claim from scratch with the
following four-step protocol:
\begin{enumerate}
\item \emph{Fetch.} Download the Zenodo archive at the version-specific
  DOI for this release (see Section~\ref{sec:supplementary}) and extract.
\item \emph{Run the master verifier.} For each $p \in \{2617, 10501,
  12391\}$, execute the command
  \begin{quote}
    \small
    \texttt{python3 scripts/verify\_bls.py
      data/bls\_certificate\_w<p>.json}.
  \end{quote}
  \texttt{verify\_bls.py} treats every claim in the certificate as
  untrusted input: it re-reads $N$ from the exponent, independently
  rebuilds $F$ from the recorded $v_q(N - 1)$ data, re-runs PARI/GP
  \texttt{isprime(q, 2)} on every prime in the decomposition, re-checks
  the Fermat witnesses, verifies $F^3 > N$ and $\gcd(F, R) = 1$,
  re-computes and re-tests the BLS discriminant, and re-runs
  Condition~(II) in $\Z[\sqrttwo]/(N)$. Any discrepancy exits non-zero.
\item \emph{Re-verify the factorisation.} For each recorded factor,
  the driver re-runs APR-CL primality and re-computes the product
  $\prod_i p_i^{e_i} = \Phi_d(2)$ exactly. Small $\Phi_d(2)$ are
  additionally re-factored from scratch via SymPy; for those
  $\Phi_d(2)$ whose full factorisation relies on external sources
  (Cunningham tables and FactorDB, tagged as such in the
  \texttt{factor\_provenance} field of Remark~\ref{rem:provenance}),
  the recorded factorisation is treated as input to be verified, not
  regenerated --- their large primes (e.g.\ the $371$-digit prime
  factor of $\Phi_{2065}(2)$) cannot be rediscovered by SymPy in any
  realistic time.
\item \emph{Inspect logs.} The \texttt{logs/} directory contains the
  full stdout of the runs that produced the certificates on the
  original hardware, for line-by-line comparison.
\end{enumerate}

The archive is deterministic and version-pinned; running the drivers
and verifier on a clean machine reproduces every numeric result in
this paper.

\section{Summary}
\label{sec:outlook}

This paper delivers three unconditional primality proofs of
$W_{2617}$, $W_{10501}$, and $W_{12391}$ via the BLS $N - 1$
criterion (Theorems~\ref{thm:w2617}, \ref{thm:w10501},
\ref{thm:w12391}), independent of ECPP. Every prime of every factored
portion is certified by APR-CL; each certificate also records, as an
independent implementation check, the Chua $a = 3$ congruence
$\wom^{(W_p + 1)/2} \equiv -1 \pmod{W_p}$ in
$\Z[\sqrttwo]/(W_p)$. All primality certificates, driver scripts, and
full run logs are archived on Zenodo under a concept DOI.

Table~\ref{tab:ceiling} makes the reach of the present technique
concrete: within the $36$ known Wagstaff primes and probable primes,
the only two exponents above $2617$ for which $p - 1$ is smooth
enough to meet the BLS factored-portion threshold on current
Cunningham and FactorDB data are $p = 10501$ and $p = 12391$ ---
precisely the two targets of this paper. The closest near-miss is
$p = 14479$, where the relevant cyclotomic factor $\Phi_{14478}(2)$
is only partially factored; the remaining Wagstaff primes and
probable primes in the interval $2617 < p \leq 267017$ are out of
reach of BLS $N - 1$ at present because $p - 1$ contains a large
prime factor forcing an unfactored $\Phi_d(2)$.

\appendix

\section{Certificate digests}
\label{sec:digests}

For each $p \in \{2617, 10501, 12391\}$, Table~\ref{tab:digests}
records a SHA-256 digest of the corresponding JSON certificate,
together with the key aggregate data extracted from it. The digest
is computed over the canonical serialisation
\texttt{json.dumps(obj, sort\_keys=True, separators=(',', ':'))}, so
any byte-level modification of the certificate changes the recorded
hash. A referee running \texttt{verify\_bls.py} can reproduce each
digest from the archived JSON before accepting the file as the
input of the re-verification protocol.

\begin{table}[h]
\centering
\small
\begin{tabular}{rrrrrl}
\toprule
$p$ & $F$ digits & primes in $F$ & $M$ (bits) & largest $q$ & SHA-256 (first 16 hex) \\
\midrule
$ 2617$ &  $268$ &  $22$ &   $46$ & $ 50$\,d & \texttt{823fad79bef1f5eb} \\
$10501$ & $1381$ & $103$ & $3261$ & $160$\,d & \texttt{c7b3d40558d723e2} \\
$12391$ & $1531$ & $ 61$ & $2860$ & $371$\,d & \texttt{1b0dc924417d9fa7} \\
\bottomrule
\end{tabular}
\caption{Per-proof certificate digests and aggregates. ``largest $q$''
is the digit count of the largest prime in the factored portion $F$;
every such prime receives an unconditional APR-CL certificate.
The full SHA-256 hashes are listed below.}
\label{tab:digests}
\end{table}

{\small\noindent Full SHA-256 hashes of the canonical-form certificates:
\begin{align*}
W_{2617}:\ \  & \mathtt{823fad79bef1f5eb27218245068da27e}\allowbreak \\
              & \mathtt{e0790991bb28b328482ddda93cf1c719}, \\
W_{10501}:\ \ & \mathtt{c7b3d40558d723e2818a147f9f7cfca1}\allowbreak \\
              & \mathtt{1ae58601bbf0e7dceb1bb5d442c39ea5}, \\
W_{12391}:\ \ & \mathtt{1b0dc924417d9fa7411c1bc830b7114a}\allowbreak \\
              & \mathtt{daac3782c0c10e82938dce848f063570}.
\end{align*}}

The archived certificates also record: (i) the complete prime
factorisation of $F$ with provenance tags; (ii) the Fermat-type
witness $a_q$ for each prime $q \mid F$; (iii) the APR-CL wall time
for each such prime; and (iv) the BLS discriminant
$\Delta = s^2 - 8\,m$, written as a large integer in decimal, with
the square-test outcome.

\section{Reference data}
\label{sec:appendix}

\subsection{The 36 known Wagstaff primes and probable primes}

Proved prime ($p \leq 42737$, by ECPP~\cite{AtkinMorain} or by this
paper), excluding the trivial case $p = 3$ for which $W_3 = 3$:
\begin{align*}
p \in \{ & 5,\ 7,\ 11,\ 13,\ 17,\ 19,\ 23,\ 31,\ 43,\ 61,\ 79,\ 101,\ 127,\ 167, \\
         & 191,\ 199,\ 313,\ 347,\ 701,\ 1709,\ 2617,\ 3539,\ 5807, \\
         & 10501,\ 10691,\ 11279,\ 12391,\ 14479,\ 42737 \}
\end{align*}
--- $29$ exponents. Probable prime ($p > 42737$, not yet proved but
consistent with every applied pseudoprime test):
\[
  p \in \{83339, 95369, 117239, 127031, 138937, 141079, 267017\}
\]
--- $7$ exponents. The ``$36$ known'' count in the header includes both
groups.

\subsection{Cyclotomic decomposition of $W_p - 1$}

For odd prime $p$ and $N = W_p$:
\[
  N - 1 \;=\; \frac{2^p - 2}{3} \;=\; \frac{2(2^{p-1} - 1)}{3}.
\]
The cyclotomic decomposition
\[
  2^{p-1} - 1 \;=\; \prod_{d \mid p-1} \Phi_d(2)
\]
harvests factored portions from divisors $d$ of $p - 1$; each
$\Phi_d(2)$ is a positive integer whose prime factors lie in an
arithmetic progression $r \equiv 1 \pmod d$. This is the source of
the BLS factored portions in
\S\S\ref{ssec:w2617}--\ref{ssec:w12391}.

\end{document}